\def\thtext#1{
  \catcode`@=11
  \gdef\@thmcountersep{. #1}
  \catcode`@=12
}
\def\threst{
  \catcode`@=11
  \gdef\@thmcountersep{.}
  \catcode`@=12
}
 \def\.{.\spacefactor\@m}
\theoremstyle{plain}
\newtheorem{thm}{Theorem}[section]
\newtheorem{prop}{Proposition}[section]
\newtheorem{cor}[prop]{Corollary}
\newtheorem{lem}[prop]{Lemma}
\theoremstyle{definition}
\newtheorem{constr}[prop]{Construction}
\newcommand{\cH}{\mathcal{H}}
\newcommand{\cM}{\mathcal{M}}
\newcommand{\cP}{\mathcal{P}}
\newcommand{\cR}{\mathcal{R}}
\newcommand{\cT}{\mathcal{T}}
\newcommand{\R}{\mathbb{R}}
\newcommand{\s}{\sigma}
\newcommand{\hd}{\widehat{d}}
\newcommand{\diam}{\operatorname{diam}}
\newcommand{\dis}{\operatorname{dis}}
\newcommand{\opt}{{\operatorname{opt}}}
\newcommand{\smt}{\operatorname{smt}}
\newcommand{\SMT}{\operatorname{SMT}}
\renewcommand{\:}{\colon}
\newcommand{\0}{\emptyset}
\newcommand{\sm}{\setminus}
\renewcommand{\ss}{\subset}
\renewcommand{\sp}{\supset}
\newcommand{\x}{\times}
\begin{document}

\author{A.O.Ivanov, N.K.Nikolaeva, A.A.Tuzhilin}
\title{Steiner Problem in Gromov--Hausdorff Space: the Case of Finite Metric Spaces}
\date{}
\maketitle

\begin{abstract}
It is shown that each finite family of finite metric spaces, being considered as a subset of Gromov--Hausdorff space, can be connected by a Steiner minimal tree.
\end{abstract}

\section{Introduction}
\markright{\thesection.~Introduction}
The present paper is devoted to the Steiner problem in the space of compact metric spaces, endowed with Gromov--Hausdorff metric. It is shown that each boundary set consisting of finite metric spaces only can be connected by a Steiner minimal tree. In the general case, the authors have solved the Steiner problem for $2$-point boundaries~\cite{IvaNikolaevaTuz}, where the problem is equivalent to the fact that the ambient space is geodesic. General case of more than $2$ boundary points has resisted to the authors attempts based on the Gromov precompactness criterion. Nevertheless, we hope that the technique we worked out will be useful for either proving the theorem, or for constructing a counterexample.

\section{Main Definitions and Results}
\markright{\thesection.~Main Definitions and Results}
Let $X$ be an arbitrary metric space.  By $|xy|$ we denote the distance between points $x,\,y\in X$. Let $\cP(X)$ be the family of all nonempty subsets of $X$. For $A,\,B\in\cP(X)$ we put
$$
d_H(A,B)=\max\bigl\{\sup_{a\in A}\inf_{b\in B}|ab|,\,\sup_{b\in B}\inf_{a\in A}|ab|\bigr\}.
$$
The value $d_H(A,B)$ is called the \emph{Hausdorff distance between $A$ and $B$}.

Notice that $d_H(A,B)$ as may be equal to infinity (e.g., for $X=A=\R$ and $B=\{0\}\ss\R$), so as may vanish for non-equal $A$ and $B$ (e.g., for $X=\R$, $A=[a,b]$, and $B=[a,b)$).

Let $\cH(X)\ss\cP(X)$ denote the set of all nonempty closed bounded subsets of $X$.

\begin{prop}[\cite{BurBurIva}]
The restriction of $d_H$ onto $\cH(X)$ is a metric.
\end{prop}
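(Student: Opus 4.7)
The plan is to verify, in turn, the four axioms of a metric: finiteness, positivity with non-degeneracy, symmetry, and the triangle inequality. Symmetry is immediate from the definition, so the work lies in the other three.

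First I would handle finiteness. The issue in the $X=\R$, $B=\{0\}$ example is unboundedness of $A$; with $A,B\in\cH(X)$ bounded, I can fix $a_0\in A$, $b_0\in B$ and estimate, for any $a\in A$, $b\in B$, that $|ab|\le\diam A+|a_0b_0|+\diam B$. This uniform bound shows both suprema entering the definition of $d_H(A,B)$ are finite.

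Next, non-degeneracy. If $A=B$ the distance is clearly $0$. Conversely, suppose $d_H(A,B)=0$. Then for every $a\in A$ we have $\inf_{b\in B}|ab|=0$, so there is a sequence $b_n\in B$ with $|ab_n|\to 0$, meaning $a$ lies in the closure of $B$. Since $B$ is closed, $a\in B$; hence $A\ss B$, and by the symmetric argument $B\ss A$. This is exactly the place where the hypothesis $A,B\in\cH(X)$ (closedness) is used, and it explains the second pathological example in the excerpt.

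For the triangle inequality $d_H(A,C)\le d_H(A,B)+d_H(B,C)$, I would fix $\epsilon>0$ and $a\in A$, choose $b\in B$ with $|ab|\le\inf_{b'\in B}|ab'|+\epsilon\le d_H(A,B)+\epsilon$, and then choose $c\in C$ with $|bc|\le d_H(B,C)+\epsilon$. The ordinary triangle inequality in $X$ yields $|ac|\le d_H(A,B)+d_H(B,C)+2\epsilon$, so $\inf_{c\in C}|ac|\le d_H(A,B)+d_H(B,C)+2\epsilon$; letting $\epsilon\to 0$ and taking the supremum over $a\in A$ bounds the first sup in the definition of $d_H(A,C)$. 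The symmetric argument (swapping the roles of $A$ and $C$) bounds the second, completing the proof. None of the steps is a real obstacle; the only subtlety worth flagging is that closedness of $A$ and $B$ is essential precisely at the non-degeneracy step, while boundedness is essential precisely at the finiteness step.
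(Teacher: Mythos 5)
Your proof is correct. The paper itself gives no argument for this proposition (it is quoted from the textbook \cite{BurBurIva}), and your verification of the metric axioms --- boundedness for finiteness, closedness for non-degeneracy, and the $\epsilon$-argument for the triangle inequality --- is exactly the standard proof found there, with the roles of the two hypotheses on $\cH(X)$ correctly identified.
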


Let $X$ and $Y$ be metric spaces. The triple $(X',Y',Z)$ consisting of a metric space $Z$ and its subsets $X'$ and $Y'$ which are isometric to $X$ and $Y$, respectively, is called a \emph{realization of the pair $(X,Y)$}. We put
$$
d_{GH}(X,Y)=\inf\bigl\{r:\exists (X',Y',Z),\,d_H(X',Y')\le r\bigr\}.
$$
The value $d_{GH}(X,Y)$ is called the \emph{Gromov--Hausdorff distance between $X$ and $Y$}.

By $\cM$ we denote  the set of all compact metric spaces considered up to an isometry.

\begin{prop}[\cite{BurBurIva}]
The restriction of $d_{GH}$ onto $\cM$ is a metric.
\end{prop}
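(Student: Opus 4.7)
The plan is to verify, in turn, the four axioms of a metric (finiteness, symmetry together with the vanishing $d_{GH}(X,X)=0$, the identity of indiscernibles, and the triangle inequality), viewing each realization $(X',Y',Z)$ as a device for transporting the problem into a single ambient metric space where the already-established properties of $d_H$ from the previous proposition can be invoked.

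First I would dispose of the easy properties. Non-negativity and symmetry are immediate from the definition. For $d_{GH}(X,X)=0$, take $Z=X$ and $X'=Y'=X$. For finiteness on $\cM$, given compact $X,Y$ of finite diameter, put $Z:=X\sqcup Y$ with the original metrics on each piece and with any cross-distance $|xy|:=C$ for a constant $C$ large enough to satisfy the triangle inequality (e.g.\ $C\ge\tfrac12\max\{\diam X,\diam Y\}$); then $d_H(X,Y)\le C<\infty$ in this realization.

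The triangle inequality requires a gluing construction. Given $r_1>d_{GH}(X,Y)$ and $r_2>d_{GH}(Y,Z)$, pick realizations $(X',Y'_1,W_1)$ and $(Y'_2,Z',W_2)$ with $d_H(X',Y'_1)\le r_1$ and $d_H(Y'_2,Z')\le r_2$. Form $W:=(W_1\sqcup W_2)/\!\sim$, identifying $Y'_1$ with $Y'_2$ via the composition of the two isometries to $Y$. Extend the metrics by setting, for $x\in X'$ and $z\in Z'$,
\[
|xz|:=\inf_{y\in Y}\bigl(|xy|_{W_1}+|yz|_{W_2}\bigr),
\]
and verify that this yields a (pseudo)metric on $W$; then the already-proved triangle inequality for $d_H$ in $W$ gives $d_H(X',Z')\le d_H(X',Y')+d_H(Y',Z')\le r_1+r_2$, whence $d_{GH}(X,Z)\le r_1+r_2$, and letting $r_i\to d_{GH}$ finishes the step. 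The one slightly finicky point is checking that the infimum above is nondegenerate and satisfies the triangle inequality; if it only gives a pseudometric, quotienting by zero-distance pairs and passing to the completion repairs this without changing the relevant Hausdorff distances.

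The main obstacle is the identity of indiscernibles: showing that $d_{GH}(X,Y)=0$ forces $X$ and $Y$ to be isometric. From a sequence of realizations $(X'_n,Y'_n,Z_n)$ with $d_H(X'_n,Y'_n)\to 0$, one extracts, for each $n$, a map $f_n\:X\to Y$ that is an approximate isometry with distortion tending to zero (pick $f_n(x)$ to be any point of $Y'_n$ closest to the copy of $x$ in $X'_n$). Fix a countable dense set $\{x_k\}\ss X$; by compactness of $Y$, a diagonal argument yields a subsequence along which $f_n(x_k)$ converges for every $k$ to some $f(x_k)\in Y$, and the vanishing distortion propagates $f$ to an isometric embedding $X\to Y$. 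A symmetric argument produces an isometric embedding $Y\to X$, and a standard fact about compact metric spaces---that an isometric self-embedding of a compact metric space is surjective---then forces both embeddings to be bijective isometries. This compactness-plus-extraction step is the only place where the restriction to $\cM$ is genuinely used, and it is the technical heart of the proposition.
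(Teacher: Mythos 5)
Your proof is correct, and since the paper does not prove this proposition at all (it is quoted from Burago--Burago--Ivanov), the right comparison is with that standard reference: your argument is essentially the textbook one. The gluing of two realizations along the common copy of $Y$ is exactly how the triangle inequality is usually obtained, and the identity of indiscernibles via nearest-point maps $f_n\:X\to Y$ of vanishing distortion, a countable dense set, compactness of $Y$ and a diagonal extraction is the standard route; the only stylistic deviation is at the very end, where the usual finish checks that the limiting distance-preserving map is surjective because the images of the $f_n$ are $2\epsilon_n$-nets, whereas you produce embeddings in both directions and invoke the fact that a distance-preserving self-map of a compact space is surjective --- both finishes are valid. It is worth noting that within the paper's own toolkit there is a slicker packaging: by Proposition~\ref{prop:GH-metri-and-relations} the triangle inequality follows from composing correspondences ($\dis(S\circ R)\le\dis R+\dis S$), and by Proposition~\ref{prop:optimal-cor} the equality $d_{GH}(X,Y)=0$ for compact $X,Y$ yields an optimal correspondence of zero distortion, which is precisely the graph of an isometry; of course the proof of Proposition~\ref{prop:optimal-cor} conceals a compactness argument of the same nature as yours, so this is a reorganization rather than a genuinely cheaper proof. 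Two small points to tidy: in the gluing step the cross-distance formula should be declared for all pairs $w_1\in W_1$, $w_2\in W_2$, and one should remark that distances inside $W_1$ and $W_2$ are not decreased by the gluing (this is what guarantees $d_H(X',Y')\le r_1$ and $d_H(Y',Z')\le r_2$ still hold in $W$); and in the finiteness construction the constant $C$ should be taken strictly positive so that the glued distance is a genuine metric even when both diameters vanish.
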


The Gromov--Hausdorff distance can be effectively investigated in terms of correspondences.

Let $X$ and $Y$ be arbitrary nonempty sets. We put $\cP(X,Y)=\cP(X\x Y)$. The elements of $\cP(X,Y)$ are called \emph{relations\/} between $X$ and $Y$. If $X'\ss X$ and $Y'\ss Y$ are nonempty subsets, and $\s\in\cP(X,Y)$, then we put
$$
\s|_{X'\x Y'}=\bigl\{(x,y)\in\s:x\in X',\,y\in Y'\bigr\}.
$$
Notice that $\s|_{X'\x Y'}$ may be empty and, thus, may not belong to $\cP(X',Y')$.

Let $\pi_X\:(x,y)\mapsto x$ and $\pi_Y\:(x,y)\mapsto y$ be the canonical projections. A relation $\s\in\cP(X,Y)$ is called a \emph{correspondence}, if the restrictions of $\pi_X$ and $\pi_Y$ onto $\s$ are surjective. By $\cR(X,Y)$ we denote the set of all correspondences between $X$ and $Y$.

If $X$ and $Y$ are metric spaces, then for each relation $\s\in\cP(X,Y)$ its \emph{destortion\/} is defined as
$$
\dis\s=\sup\Bigl\{\bigl||xx'|-|yy'|\bigr|: (x,y),\,(x',y')\in\s\Bigr\}.
$$

\begin{prop}[\cite{BurBurIva}]\label{prop:GH-metri-and-relations}
Let $X$ and $Y$ be metric spaces. Then
$$
d_{GH}(X,Y)=\frac12\inf\bigl\{\dis R:R\in\cR(X,Y)\bigr\}.
$$
\end{prop}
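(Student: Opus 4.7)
The plan is to prove both inequalities separately, showing that an optimal correspondence yields an optimal realization and vice versa, with the factor of $1/2$ arising from the two-sided nature of Hausdorff distance.

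For the inequality $d_{GH}(X,Y)\le\tfrac12\inf\{\dis R:R\in\cR(X,Y)\}$, I would take any correspondence $R\in\cR(X,Y)$ with $\dis R<2r$ and manufacture a realization achieving Hausdorff distance at most $r$. The natural construction is to set $Z=X\sqcup Y$ (disjoint union), keep the original metrics on $X$ and $Y$, and define the cross-distances by
$$
|xy|_Z=\inf\bigl\{|xx'|+r+|y'y|:(x',y')\in R\bigr\}
$$
for $x\in X$, $y\in Y$. The key steps are then: (i) verify that this yields a pseudometric on $Z$ (the only nontrivial triangle inequalities involve one endpoint in $X$ and one in $Y$, and the hypothesis $\dis R<2r$ is exactly what forces $||xx'|-|yy'||\le 2r$ so that the infimum does not collapse distances inside $X$ or $Y$); (ii) pass to the metric quotient if necessary so that the inclusions of $X$ and $Y$ remain isometric; (iii) observe that for every $x\in X$, picking any $y$ with $(x,y)\in R$ (which exists by surjectivity of $\pi_X$ on $R$) gives $|xy|_Z\le r$, and symmetrically, so $d_H(X,Y)\le r$ in $Z$.

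For the reverse inequality, I would start from an arbitrary realization $(X',Y',Z)$ with $d_H(X',Y')<r$ and produce a correspondence $R$ with $\dis R\le 2r$. Identifying $X\cong X'$ and $Y\cong Y'$, for each $x\in X$ choose $f(x)\in Y$ with $|x\,f(x)|_Z<r$, and for each $y\in Y$ choose $g(y)\in X$ with $|g(y)\,y|_Z<r$; then set
$$
R=\bigl\{(x,f(x)):x\in X\bigr\}\cup\bigl\{(g(y),y):y\in Y\bigr\}.
$$
Surjectivity of both projections is immediate. For any $(x_1,y_1),(x_2,y_2)\in R$, the triangle inequality in $Z$ gives
$$
\bigl||x_1x_2|-|y_1y_2|\bigr|\le|x_1y_1|_Z+|x_2y_2|_Z<2r,
$$
so $\dis R\le 2r$. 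Taking infima over $r>d_{GH}(X,Y)$ on both sides then gives both inequalities.

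The main obstacle is the pseudometric verification in the first direction, specifically checking that the formula for $|xy|_Z$ does not corrupt the intrinsic metrics of $X$ and $Y$: one must ensure that for $x_1,x_2\in X$, the path-like expression $|x_1y|_Z+|yx_2|_Z$ is bounded below by $|x_1x_2|$, which is exactly where the bound $\dis R<2r$ enters. Everything else is bookkeeping. The $\ge$ direction is routine once the right $f$ and $g$ are selected.
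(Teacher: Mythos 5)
Your proof is correct, but note that the paper does not prove this proposition at all---it is quoted from~\cite{BurBurIva}---and your argument is essentially the standard one given there: glue $X$ and $Y$ along $R$ with cross-distances $\inf\{|xx'|+r+|y'y|:(x',y')\in R\}$ to get $d_{GH}\le\frac12\dis R$, and extract a correspondence from any realization with $d_H<r$ (your choice functions $f,g$ are a harmless variant of taking $R=\{(x,y):|xy|_Z<r\}$) for the reverse inequality. One small simplification: since $\dis R<2r$ forces $r>0$, all cross-distances in your construction are at least $r>0$ while the metrics on $X$ and $Y$ are unchanged, so the formula already defines a genuine metric on the disjoint union and no passage to a metric quotient is needed.
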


For a metric space $X$m be $\diam X$ we denote its \emph{diameter\/}: $\diam X=\sup\bigl\{|xy|:x,\,y\in X\bigr\}$.

\begin{cor}[\cite{BurBurIva}]\label{cor:below-bound}
For any metric spaces $X$ and $Y$ such that the diameter of at least one of them is finite, we have
$$
d_{GH}(X,Y)\ge\frac12|\diam X-\diam Y|.
$$
\end{cor}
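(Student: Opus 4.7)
The plan is to apply Proposition~\ref{prop:GH-metri-and-relations} directly: it suffices to show that for every correspondence $R\in\cR(X,Y)$ one has $\dis R\ge|\diam X-\diam Y|$, and then take the infimum over $R$ and divide by $2$.

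Without loss of generality assume $\diam X\ge\diam Y$ (so in particular $\diam Y<\infty$ under the stated hypotheses; the symmetric case is identical). Fix an arbitrary $R\in\cR(X,Y)$ and pick any $x,x'\in X$. Surjectivity of $\pi_X|_R$ yields points $y,y'\in Y$ with $(x,y),(x',y')\in R$, and by the definition of distortion
$$
|xx'|\le|yy'|+\bigl||xx'|-|yy'|\bigr|\le\diam Y+\dis R.
$$
Taking the supremum over $x,x'\in X$ gives $\diam X\le\diam Y+\dis R$, i.e.\ $\dis R\ge\diam X-\diam Y=|\diam X-\diam Y|$.

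In the remaining case where exactly one diameter is finite, say $\diam X<\infty$ and $\diam Y=\infty$, the right hand side is $+\infty$; but then for any correspondence $R$ one can choose $(x,y),(x',y')\in R$ with $|yy'|$ arbitrarily large, so $\dis R\ge|yy'|-|xx'|\ge|yy'|-\diam X$ is also unbounded, and the inequality holds trivially.

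The argument is entirely routine; the only point requiring any care is remembering that surjectivity of the projections (rather than being merely a relation) is what lets us realize an arbitrary pair $(x,x')$ as the $X$-coordinates of elements of $R$, so that the defining supremum of $\dis R$ controls $\diam X$ via $\diam Y$. No real obstacle arises.
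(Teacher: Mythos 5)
Your proof is correct and complete; the only quibble is that your separate final paragraph is redundant, since your WLOG case $\diam X\ge\diam Y$ (with $\diam Y<\infty$ forced by the hypothesis) already yields $\dis R=\infty$ for every $R$ when $\diam X=\infty$. Note that the paper itself offers no proof of this corollary: it is quoted from Burago--Burago--Ivanov, where the standard derivation works directly from the embedding definition of $d_{GH}$ --- if $X$ and $Y$ sit in a common space $Z$ with $d_H(X,Y)\le r$, then any $x,x'\in X$ have points of $Y$ within $r+\varepsilon$, giving $\diam X\le\diam Y+2r+2\varepsilon$. Your route instead goes through Proposition~\ref{prop:GH-metri-and-relations}, showing $\dis R\ge|\diam X-\diam Y|$ for every correspondence $R$; this is legitimate here because that proposition is stated for arbitrary metric spaces (no compactness is needed), and it is arguably more in the spirit of the rest of the paper, which works throughout with correspondences and distortions. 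The two arguments are of essentially the same length and depth, so neither buys much over the other; yours has the small advantage of reusing a tool already on the table, the cited one the advantage of needing only the definition of $d_{GH}$.
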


A correspondence $R\in\cR(X,Y)$ is called \emph{optimal}, if $d_{GH}(X,Y)=\frac12\dis R$. By $\cR_\opt(X,Y)$  we denote the set of all optimal correspondences between $X$ and $Y$.

\begin{prop}[\cite{IvaIliTuz}, \cite{Memoli}, \cite{blog}]\label{prop:optimal-cor}
For $X,\,Y\in\cM$ we have $\cR_\opt(X,Y)\ne\0$.
\end{prop}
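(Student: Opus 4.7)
The plan is to use a compactness argument on the space of correspondences itself, viewed as closed subsets of the compact product $X \times Y$. I would first pick a minimizing sequence $R_n \in \cR(X,Y)$ with $\dis R_n \to 2\,d_{GH}(X,Y)$. Replacing $R_n$ by its closure in $X \times Y$ changes nothing: continuity of the metric shows that the supremum defining $\dis$ is the same on $R_n$ and $\overline{R_n}$, and because $X \times Y$ is compact, continuity of the projections $\pi_X,\pi_Y$ combined with $\pi_X(R_n)=X$, $\pi_Y(R_n)=Y$ gives $\pi_X(\overline{R_n})=X$ and $\pi_Y(\overline{R_n})=Y$. So without loss of generality each $R_n$ is a nonempty closed subset of the compact space $X \times Y$.

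Next I would invoke the classical fact that, for a compact metric space $Z$, the hyperspace $\cH(Z)$ is compact in the Hausdorff metric. Applied to $Z = X \times Y$ (endowed, say, with the sum or max metric), this lets me extract a subsequence, still denoted $R_n$, converging in Hausdorff distance to some $R \in \cH(X \times Y)$. The remaining task is to show that $R$ is an optimal correspondence. For surjectivity of $\pi_X|_R$, I take any $x \in X$ and choose $y_n$ with $(x,y_n) \in R_n$; compactness of $Y$ yields a subsequence $y_{n_k} \to y$, and the Hausdorff convergence $R_{n_k} \to R$ together with the standard fact ``$a_k \in A_k$, $a_k \to a$, $A_k \to A$ in $d_H$ implies $a \in A$'' gives $(x,y) \in R$. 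Symmetrically for $\pi_Y|_R$, so $R \in \cR(X,Y)$.

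For the distortion, I would establish the lower semi-continuity inequality $\dis R \le \liminf_n \dis R_n$. Given any two pairs $(x_1,y_1),(x_2,y_2) \in R$, Hausdorff convergence produces $(x_i^{(n)},y_i^{(n)}) \in R_n$ with $(x_i^{(n)},y_i^{(n)}) \to (x_i,y_i)$; then $\bigl||x_1^{(n)}x_2^{(n)}|-|y_1^{(n)}y_2^{(n)}|\bigr| \le \dis R_n$, and passing to the limit via continuity of the distance gives $\bigl||x_1x_2|-|y_1y_2|\bigr| \le \liminf_n \dis R_n = 2\,d_{GH}(X,Y)$. Taking the supremum over the chosen pairs yields $\dis R \le 2\,d_{GH}(X,Y)$, while Proposition~\ref{prop:GH-metri-and-relations} gives the reverse inequality, so $R$ is optimal.

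The only place that genuinely uses compactness of $X$ and $Y$ (as opposed to mere boundedness) is the diagonal extraction argument in both Step~2 (compactness of $\cH(X\times Y)$) and Step~3 (extracting $y_{n_k} \to y$); this is also the step that would fail in an attempted generalization of Proposition~\ref{prop:optimal-cor} beyond $\cM$, and is the conceptual heart of the proof. Everything else—preservation of surjectivity under closure, invariance of distortion under closure, and the limit inequality for distortion—is a routine consequence of continuity of the distance function.
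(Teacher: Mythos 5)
Your argument is correct and is essentially the standard approach: the paper gives no internal proof of Proposition~\ref{prop:optimal-cor}, deferring to \cite{IvaIliTuz}, \cite{Memoli}, \cite{blog}, and the proof found there is exactly your scheme --- replace a minimizing sequence of correspondences by closed ones, use compactness of $\cH(X\x Y)$ in the Hausdorff metric to extract a convergent subsequence, and check that the limit is a correspondence whose distortion is at most $\liminf_n\dis R_n=2d_{GH}(X,Y)$, the reverse inequality coming from Proposition~\ref{prop:GH-metri-and-relations}. (A minor simplification: surjectivity of the projections on $\overline{R_n}$ needs no compactness, since $\pi_X(\overline{R_n})\sp\pi_X(R_n)=X$.)
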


Let $\cM_n\ss\cM$ consist of all metric spaces containing at most $n$ points; and let $\cM(d)\ss\cM$ consist of all spaces, whose diameters are at most $d$; at last, put $\cM_n(d)=\cM_n\cap\cM(d)$.

\begin{prop}[\cite{BurBurIva}]\label{prop:kompakt}
The space $\cM_n(d)$ is compact.
\end{prop}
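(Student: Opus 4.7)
The plan is to exhibit $\cM_n(d)$ as the continuous image of an evidently compact subset of Euclidean space. First I would parameterize $\cM_n(d)$ by distance matrices: every $X\in\cM_n(d)$ with $k\le n$ points admits an enumeration $x_1,\dots,x_n$ (with repetitions allowed when $k<n$), producing the distance matrix $A_X=(|x_ix_j|)\in\R^{n\x n}$. Setting
$$
K_n(d)=\bigl\{A=(a_{ij})\in[0,d]^{n\x n}: a_{ii}=0,\ a_{ij}=a_{ji},\ a_{ij}\le a_{ik}+a_{kj}\ \text{for all } i,j,k\bigr\},
$$
I observe that $K_n(d)$ is cut out of the compact cube $[0,d]^{n\x n}$ by finitely many closed conditions, hence is compact.

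Conversely, any $A\in K_n(d)$ is a pseudometric on $\{1,\dots,n\}$, and quotienting by $i\sim j\iff a_{ij}=0$ yields an honest metric space $\Phi(A)\in\cM_n(d)$. The map $\Phi\:K_n(d)\to\cM_n(d)$ is surjective by the previous paragraph.

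The main step is to verify that $\Phi$ is continuous with respect to the sup norm on matrices and $d_{GH}$ on $\cM_n(d)$. Given $A,B\in K_n(d)$, write $[i]_A\in\Phi(A)$ for the class of the index $i$, and similarly $[i]_B\in\Phi(B)$. The relation
$$
R=\bigl\{([i]_A,[i]_B):1\le i\le n\bigr\}\ss\Phi(A)\x\Phi(B)
$$
is a correspondence, since its projections onto $\Phi(A)$ and $\Phi(B)$ are surjective by construction. Because $\bigl|[i]_A\,[j]_A\bigr|=a_{ij}$ and $\bigl|[i]_B\,[j]_B\bigr|=b_{ij}$, its distortion equals $\max_{i,j}|a_{ij}-b_{ij}|$. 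Invoking Proposition~\ref{prop:GH-metri-and-relations} gives
$$
d_{GH}\bigl(\Phi(A),\Phi(B)\bigr)\le\tfrac12\max_{i,j}|a_{ij}-b_{ij}|,
$$
so $\Phi$ is Lipschitz. Consequently $\cM_n(d)=\Phi(K_n(d))$ is compact as the continuous image of a compact set.

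I do not anticipate a real obstacle: the only mildly delicate point is that matrices with vanishing off-diagonal entries correspond to spaces of fewer than $n$ points, which is precisely why the quotient in the definition of $\Phi$ is necessary. Everything else reduces to bookkeeping plus one application of Proposition~\ref{prop:GH-metri-and-relations}.
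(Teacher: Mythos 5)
Your proof is correct. The paper offers no argument of its own for this proposition (it is quoted from \cite{BurBurIva}), and your distance-matrix argument is exactly the standard one behind that citation: $K_n(d)$ is a closed subset of the cube $[0,d]^{n\x n}$, hence compact, and the correspondence $R=\{([i]_A,[i]_B)\}$ together with Proposition~\ref{prop:GH-metri-and-relations} makes $\Phi$ a $\tfrac12$-Lipschitz surjection onto $\cM_n(d)$, whose image is therefore compact. The two points you leave as bookkeeping are indeed routine: the quotient metric is well defined because $a_{ij}=0$ forces $|a_{ik}-a_{jk}|=0$ by the triangle inequality, and $\Phi(A_X)$ is isometric to $X$ because in a genuine metric space $|x_ix_j|=0$ exactly when $x_i=x_j$.
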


Recall that a \emph{simple graph\/} is a pair $(V,E)$ consisting of a finite set $V$ and some set $E$ of $2$-element subsets of $V$. For reasons of convenience, we write $vw$ instead of $\{v,w\}$. For general Graph Theory terminology one can consult~\cite{Harary}. Since we consider only simple graphs,  we shall omit the word ``simple''.

Let $X$ be an arbitrary set and $G=(V,E)$ be a graph such that $V\ss X$. In this case we say that $G$ is a graph \emph{on the set $X$}. Let $M\ss X$ be an arbitrary finite subset and $G=(V,E)$ be a \textbf{connected\/} graph on $X$, such that $V\sp M$. For such a graph we say that it \emph{connects $M$}; the vertices from $M$ we call  \emph{boundary\/} ones, and the vertices from $V\sm M$ are referred as \emph{interior\/} ones.

Let $G=(V,E)$ be a graph on a metric space $X$. For an edge $e=vw\in E$, the \emph{length $|e|$} of $e=vw$ is the distance $|vw|$ between its vertices. The \emph{length $|G|$ of the graph $G$} is the sum of the lengths of all the edges of the graph.

For each finite subset $M$ of a metric space $X$ the value
$$
\smt(M,X)=\inf\bigl\{|G|:\text{$G$ is a graph on $X$ connecting $M$}\bigr\}
$$
is called the \emph{length of Steiner minimal tree on  $M$}.

The next result is obvious.

\begin{prop}\label{prop:minimizing-trees}
For each finite subset $M$ of a metric space $X$ we have
$$
\smt(M,X)=\inf\bigl\{|G|:\text{$G$ is a tree on $X$ connecting $M$}\bigr\}.
$$
\end{prop}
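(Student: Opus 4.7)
The plan is to show the two infima coincide by establishing both inequalities, which is essentially immediate from the existence of spanning trees.

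Write $\smt'(M,X)$ for the right-hand side infimum (over trees only). Every tree is, in particular, a connected graph, so the set of competitors for $\smt'(M,X)$ is contained in the set of competitors for $\smt(M,X)$. This yields
$$
\smt(M,X)\le \smt'(M,X).
$$

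For the reverse inequality, I would take an arbitrary connected graph $G=(V,E)$ on $X$ with $V\sp M$ and extract a spanning tree. Concretely, since $V$ is finite and $G$ is connected, one can remove edges lying on cycles one at a time without disconnecting $G$, producing a subgraph $T=(V,E')$ with $E'\ss E$ which is a tree on the same vertex set. Because all edge lengths $|vw|$ are nonnegative reals, dropping edges cannot increase the total length, so
$$
|T|=\sum_{e\in E'}|e|\le\sum_{e\in E}|e|=|G|.
$$
Since $T$ still has vertex set $V\sp M$ and is connected (being a tree), $T$ is a tree on $X$ connecting $M$, hence $\smt'(M,X)\le|T|\le|G|$. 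Taking the infimum over all connecting graphs $G$ gives $\smt'(M,X)\le\smt(M,X)$, completing the proof.

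There is no serious obstacle here: the only nontrivial ingredient is the standard graph-theoretic fact that every finite connected graph contains a spanning tree, which is why the author classifies the statement as obvious. The proof does not require any properties of the Gromov--Hausdorff setting or even of the ambient metric space beyond nonnegativity of distances, so nothing developed earlier in the paper is actually invoked.
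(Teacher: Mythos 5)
Your argument is correct and is exactly the standard spanning-tree argument the authors have in mind when they declare the proposition obvious (the paper gives no proof at all): trees are among the competing graphs, and any connected graph on a finite vertex set contains a spanning tree of no greater length since edge lengths are nonnegative. Nothing to add.
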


By $\SMT(M,X)$ we denote the set of all graphs $G$ on $X$ connecting $M\ss X$ and such that $|G|=\smt(M,X)$. Notice that $\SMT(M,X)$ may be empty. In the case when $\SMT(M,X)\ne\0$, each graph $G\in\SMT(M,X)$ does not contain cycles and is called a \emph{Steiner minimal tree on $M$}.

The technique developed in~\cite{IvaTuz} for Riemannian manifolds can be obviously generalized to proper metric spaces.

\begin{prop}\label{prop:SMT-existence}
Let $X$ be a proper metric space. Then for each nonempty finite $M\ss X$ we have $\SMT(M,X)\ne\0$.
\end{prop}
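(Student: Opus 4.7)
The plan is a direct compactness argument. First, I reduce to considering \emph{essentially reduced} trees on $X$ connecting $M$, namely trees in which every leaf lies in $M$ and every interior vertex has degree at least $3$. Any leaf not in $M$ can be deleted (strictly shortening the tree), and any interior vertex of degree $2$ can be suppressed, replacing its two incident edges by a single edge which, by the triangle inequality, is no longer than their sum. Together with Proposition~\ref{prop:minimizing-trees}, this shows that $\smt(M,X)$ equals the infimum of $|G|$ over essentially reduced trees $G$ connecting $M$. A standard handshake-lemma count bounds the number of interior vertices of such a tree by $|M|-2$, so only finitely many combinatorial types arise.

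Next, fix any tree $G_0$ connecting $M$, set $L=|G_0|$, and choose a minimizing sequence of essentially reduced trees $G_i$ connecting $M$ with $|G_i|\le L$ and $|G_i|\to\smt(M,X)$. Every vertex of every $G_i$ then lies in
$$
K=\bigl\{x\in X:|xm|\le L\text{ for some }m\in M\bigr\},
$$
which is closed and bounded, hence compact by properness of $X$. By passing to a subsequence I may assume all $G_i$ share one combinatorial type $T$ with a common bijection between the boundary vertices of $T$ and $M$. Applying sequential compactness of $K$ to each of the finitely many interior vertices of $T$, a further subsequence extraction produces a graph $\bar G$ on $X$ of combinatorial type $T$, obtained by placing each interior vertex at the limit of its positions in the $G_i$ and each boundary vertex at the corresponding point of $M$. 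Continuity of the metric then gives $|\bar G|=\lim_i|G_i|=\smt(M,X)$.

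The remaining subtlety is that distinct vertices of $T$ may converge to the same point of $X$, in which case $\bar G$ is not literally a simple graph on $X$ under the embedding $V\ss X$. Then I would identify the coinciding vertices, delete the resulting loops (of zero length), and replace multi-edges by single edges; this preserves connectivity and containment of $M$, since distinct points of $M$ have positive distance and are never identified. The outcome is a genuine graph on $X$ connecting $M$ of length at most $|\bar G|=\smt(M,X)$, so equality holds and $\SMT(M,X)\ne\0$. The main (minor) obstacle is this cleanup step, together with the verification that the reduction to essentially reduced trees at the outset preserves infima; both are routine given the triangle inequality.
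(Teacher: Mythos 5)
Your proof is correct, and it is exactly the standard direct-method argument: reduce to the finitely many combinatorial types of essentially reduced trees, take a minimizing sequence whose vertices stay in a closed bounded (hence, by properness, compact) set, extract limits of the finitely many Steiner points, and clean up collided vertices. The paper gives no written proof of this proposition, merely asserting that the technique of~\cite{IvaTuz} generalizes to proper metric spaces, and your argument is precisely that technique carried out in full, so it matches the intended approach.
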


The next result follows from ~\ref{prop:kompakt} and~\ref{prop:SMT-existence}.

\begin{cor}\label{cor:bound_SMT_existence}
For any nonempty finite set $M\ss\cM_n(d)$ we have
$$
\SMT\bigl(M,\cM_n(d)\bigr)\ne\0.
$$
\end{cor}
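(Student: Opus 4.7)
The corollary is pitched as an immediate consequence of Propositions \ref{prop:kompakt} and \ref{prop:SMT-existence}, so the plan is simply to verify that the hypothesis of Proposition \ref{prop:SMT-existence} is met by $X = \cM_n(d)$ and then invoke it.

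First, I would recall that Proposition \ref{prop:SMT-existence} requires the ambient space to be \emph{proper}, i.e., every closed ball is compact. To produce this property for $\cM_n(d)$, I would appeal to Proposition \ref{prop:kompakt}, which gives compactness of $\cM_n(d)$ outright. Since every closed subset of a compact metric space is compact, every closed ball of $\cM_n(d)$ is compact, and therefore $\cM_n(d)$ is proper. This is the only genuinely new observation in the proof.

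Second, with properness of $\cM_n(d)$ in hand, I would apply Proposition \ref{prop:SMT-existence} with $X = \cM_n(d)$ to any nonempty finite $M \ss \cM_n(d)$. The conclusion $\SMT(M, \cM_n(d)) \ne \0$ is then immediate.

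There is no real obstacle to overcome; the content of the corollary sits entirely in the two propositions being combined. In particular, I would not try to construct the Steiner tree on $M$ directly (e.g., by extracting a limit of a minimizing sequence of trees via Gromov compactness), since Proposition \ref{prop:SMT-existence} already packages exactly this kind of argument in the abstract setting of proper metric spaces. If I were asked to expand on the proof, the only elaboration worth including would be the one-line justification that compactness of $\cM_n(d)$ implies properness.
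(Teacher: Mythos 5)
Your proposal is correct and matches the paper's intent exactly: the paper itself presents the corollary as an immediate consequence of~\ref{prop:kompakt} and~\ref{prop:SMT-existence}, and your one observation---that compactness of $\cM_n(d)$ makes it proper, so~\ref{prop:SMT-existence} applies---is precisely the glue the paper leaves implicit.
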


The next Theorem is the main result of the present paper.

\begin{thm}
For each $M=\{m_1,\ldots,m_k\}\ss\cM_n$ we have
$$
\SMT(M,\cM)\ne\0.
$$
\end{thm}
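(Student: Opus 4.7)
The plan is to produce an SMT by reducing the problem to a compact subspace of $\cM$. Specifically, I will find integers $N=N(k,n)$ and $D=D(M)$ with
$$
\smt(M,\cM)=\smt\bigl(M,\cM_N(D)\bigr),
$$
after which Corollary~\ref{cor:bound_SMT_existence} immediately yields an element of $\SMT\bigl(M,\cM_N(D)\bigr)\ss\SMT(M,\cM)$. Fix any spanning tree of $M$ in $\cM$ and let $L_0$ be its length, so $\smt(M,\cM)\le L_0$. In any graph $G$ connecting $M$ with $|G|\le L_0$, every vertex $X$ satisfies $d_{GH}(X,m_i)\le L_0$, and Corollary~\ref{cor:below-bound} then bounds the diameter of $X$ by $D:=\max_i\diam m_i+2L_0$. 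By Proposition~\ref{prop:minimizing-trees} it suffices to consider trees; demanding every interior vertex to have degree at least $3$ restricts the combinatorics to at most $k-2$ Steiner points in finitely many combinatorial types.

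\paragraph{Cardinality bound.}
The core step is to bound the cardinality of every Steiner vertex. Given an interior $X$ of some tree connecting $M$, I pick optimal correspondences $R_i\in\cR_\opt(X,m_i)$ for $i=1,\ldots,k$ (they exist by Proposition~\ref{prop:optimal-cor}), and for every $y\in m_i$ I select a single preimage $x_{i,y}\in X$ with $(x_{i,y},y)\in R_i$. Setting $\tilde X=\{x_{i,y}\}\ss X$ with the induced metric, we have $|\tilde X|\le\sum_i|m_i|\le kn$. For each $i$, the relation $R_i\cap(\tilde X\x m_i)$ is a correspondence: it projects onto $m_i$ by the construction of the $x_{i,y}$, and onto $\tilde X$ because $R_i$ is already a correspondence on $X\x m_i$. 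Its distortion is at most $\dis R_i=2d_{GH}(X,m_i)$, so Proposition~\ref{prop:GH-metri-and-relations} yields $d_{GH}(\tilde X,m_i)\le d_{GH}(X,m_i)$.

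\paragraph{Main obstacle.}
This per-vertex simplification shortens every edge from a Steiner point to the boundary, but it may \emph{lengthen} an edge between two Steiner vertices, since the construction used only boundary data. The fix I would attempt is a joint refinement: when building $\tilde X_v$, include also, for each tree-neighbouring Steiner vertex $X_w$ and each already-selected point of $\tilde X_w$, a partner in $X_v$ via the optimal correspondence on the tree edge $(X_v,X_w)$. Iterating this over the at most $k-2$ Steiner vertices for a number of rounds equal to the combinatorial diameter of the tree should yield a finite system whose cardinality is still bounded by an explicit function of $k$ and $n$, and in which every tree-edge correspondence restricts to a correspondence of at most equal distortion between the simplified spaces. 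Showing that this joint iterative scheme stabilizes with a uniform cardinality bound $N$, while all tree-edge lengths are simultaneously preserved, is the step I expect to be the main technical obstacle of the proof.

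\paragraph{Conclusion.}
Once such $N$ is in hand, the simplification applied to any minimizing sequence of trees gives $\smt\bigl(M,\cM_N(D)\bigr)\le\smt(M,\cM)$, while the reverse inequality is trivial from $\cM_N(D)\ss\cM$. By Proposition~\ref{prop:kompakt} the space $\cM_N(D)$ is compact, so Corollary~\ref{cor:bound_SMT_existence} delivers a graph in $\SMT\bigl(M,\cM_N(D)\bigr)$, which by the preceding equality lies in $\SMT(M,\cM)$.
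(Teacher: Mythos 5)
Your overall strategy is the paper's: bound the diameters of all vertices via Corollary~\ref{cor:below-bound}, replace each interior vertex by a subspace of uniformly bounded cardinality without increasing any edge length, conclude $\smt(M,\cM)=\smt\bigl(M,\cM_N(D)\bigr)$, and finish with Proposition~\ref{prop:kompakt} and Corollary~\ref{cor:bound_SMT_existence}. But the decisive step --- producing those finite subspaces so that \emph{all} tree edges, in particular the Steiner--Steiner ones, are simultaneously not lengthened --- is exactly the step you leave unproved; you name it yourself as the main technical obstacle and offer only a sketch of an iterative repair. As described, the sketch has a real difficulty: every partner point you adjoin to $\tilde X_v$ for the sake of a neighbour $X_w$ must in turn receive partners in every other neighbour of $X_v$, so the sets keep growing, and neither termination with a bound $N=N(k,n)$ nor surjectivity of the restricted edge relations onto both factors is established. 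Moreover your initial seeding via correspondences $R_i\in\cR_\opt(X,m_i)$ controls distances from $X$ to the boundary spaces, which are not in general edges of the tree, so it does not by itself serve the quantity that must be preserved, namely the lengths $d_{GH}(X_v,X_w)$ of the actual tree edges. So this is a correct plan whose central lemma is missing.

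The paper closes precisely this gap with a cleaner device, which you could substitute for your refinement scheme: drop the direct correspondences $R_i$ altogether and fix one optimal correspondence $R_e$ for each \emph{edge} $e=vw$ of the tree (Proposition~\ref{prop:optimal-cor}). Since the graph is a tree, every boundary point $x\in\sqcup_i m_i$ can be propagated consistently through the whole graph: choose one point $p_v\in v$ for every vertex $v$, starting with $x$ itself in its own boundary space, so that $(p_v,p_w)\in R_{vw}$ for every edge --- the absence of cycles means there are no consistency conflicts (this is the paper's ``thread emitted from $x$''). Let $v'\ss v$ consist of all points of $v$ hit by some thread. Then $|v'|\le N:=\sum_i|m_i|\le kn$, each boundary space is unchanged because the thread from $x$ passes through $x$, and for every edge $vw$ the restriction $R_{vw}|_{v'\x w'}$ is automatically a correspondence (every thread meets both $v'$ and $w'$) with $\dis\bigl(R_{vw}|_{v'\x w'}\bigr)\le\dis R_{vw}$, so by Proposition~\ref{prop:GH-metri-and-relations} no edge lengthens. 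This yields the reduction to $\cM_N(D)$ in one stroke, with no need for the degree-three normalization or the count of combinatorial types, and the rest of your argument then goes through as you wrote it.
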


\begin{proof}
Put $r=\smt(M,\cM)$, and let $\cT(M)$ consist of all the trees $G$ on $\cM$ connecting $M$ and such that $|G|\le r+1$. By definition of $\smt$ and by~\ref{prop:minimizing-trees}, we have $\cT(M)\ne\0$ and
$$
\smt(M,\cM)=\inf\bigl\{|G|:G\in\cT(M)\bigr\}.
$$

Choose an arbitrary graph $G=(V,E)\in\cT(M)$.

\begin{lem}\label{lem:diam int point}
Put $d=\max_i\{\diam m_i\}$ and $\hd=2r+d+2$, then $V\ss\cM(\hd)$.
\end{lem}

\begin{proof}
If there exists a vertex $v\in V$, whose diameter is greater than $\hd$, then, by~\ref{cor:below-bound}, we have
$$
|G|\ge d_{GH}(v,m_1)\ge\frac12|\diam v-\diam m_1|>\frac12(2r+d+2-d)=r+1,
$$
a contradiction.
\end{proof}

\begin{constr}
For each $e=vw\in E$ choose an $R_e\in\cR_\opt(v,w)$ (it does exist by~\ref{prop:optimal-cor}). It is easy to see that for each $x\in \sqcup_im_i$ there exists a tree $T_x=(P_x,F_x)$ such that $P_x$ is obtained from $V$ by choosing just one point $p_v$ in each compact space $v\in V$ provided $p_vp_w\in R_e$ for each $vw\in E$; the edge set $F_x$ of the tree $T_x$ consists exactly of all such pairs $p_vp_w$. Thus, the mapping $v\mapsto p_v$ is an isomorphism of the graphs $G$ and $T_x$. We call the tree $T_x$ by a \emph{thread emitted from $x$}.
\end{constr}

Let $m=\sqcup_{i=1}^km_k$ and $N$ be the number of points in $m$. For each $x\in m$ consider a thread $T_x$ emitted from $x$. For each $v\in V$ we define $v'\ss v$ as
$$
v'=\{y\in v:\exists x\in m,\,y\in P_x\}.
$$
If $e=vw$, then put $e'=v'w'$. Notice that for each $v\in V$ we have $v'\in\cM_N$.

Let $V'=\{v'\}_{v\in V}$. By $G'=(V',E')$ we denote the graph such that $v'w'\in E'$, iff $vw\in E$. The above results imply that $G'$ is a graph on $\cM_N$. Clearly that the mapping $v\mapsto v'$ is an isomorphism from $G$ to $G'$.

The next property of the $v'\ss v$ follows immediately from their construction.

\begin{lem}\label{lem:low-length}
For any $e=vw\in E$ we have
$$
R_{e'}:=R_e|_{v'\x w'}\in\cR(v',w')\ \text{and}\ \dis R_{e'}\le\dis R_e.
$$
In particular, $|e'|\le|e|$, thus, $|G'|\le|G|$ and, therefore, $G'\in\cT(M)$.
\end{lem}

Now apply~\ref{lem:diam int point} and~\ref{lem:low-length}.

\begin{cor}
The $G'\in\cT(M)$ constructed above is a tree on $\cM_N(\hd)$, therefore,
$$
\smt(M,\cM)=\smt(M,\cM_N(\hd)).
$$
\end{cor}

It remains to apply~\ref{cor:bound_SMT_existence}.
\end{proof}

\end{document}